\documentclass[10pt]{amsart}
\usepackage{amssymb,amsthm,amsmath,amsfonts}
\usepackage{hyperref}
\usepackage{pstricks}
\usepackage{graphicx}
\usepackage{pst-node}

\textwidth=16cm
\oddsidemargin=0pt
\evensidemargin=0pt

\def\R{{\mathbb R}}
\def\Rr{{\mathcal R}}
\def\C{{\mathbb C}}

\def\Z{{\mathbb Z}}

\def\<{\langle}
\def\>{\rangle}

\def\U{{\mathbb U}}
\def\V{{\mathbb V}}

\def\I{\mathbb I}

\def\X{\mathbb X}

\def\Y{\mathbb Y}

\newcommand{\be}{\begin{equation}}
\newcommand{\ee}{\end{equation}}
      \newtheorem{theorem}{Theorem}[section]
       \newtheorem{proposition}[theorem]{Proposition}
       
       \newtheorem{lemma}[theorem]{Lemma}
       \newtheorem{remark}{Remark}[section]
\newtheorem{definition}{Definition}[section]

\title{On the Lukacs property for free random variables}
\author[K. Szpojankowski]{Kamil Szpojankowski}
\address[K. Szpojankowski]{Wydzia\l{} Matematyki i Nauk Informacyjnych\\
Politechnika Warszawska\\
ul. Koszykowa 75\\
00-662 Warszawa, Poland}
\email{k.szpojankowski@mini.pw.edu.pl}

\subjclass[2010]{Primary: 46L54. Secondary: 62E10.}
\keywords{Lukacs characterization, free Poisson distribution, free cumulants, free gamma distribution}

\begin{document}
\begin{abstract}
The Lukacs property of the free Poisson distribution is studied here. We prove that if free $\X$ and $\Y$ are free Poisson distributed with suitable parameters, then $\X+\Y$ and $\left(\X+\Y\right)^{-\frac{1}{2}}\X\left(\X+\Y\right)^{-\frac{1}{2}}$ are free. As as an auxiliary result we give joint cumulants of $\X$ and $\X^{-1}$ for free Poisson distributed $\X$. We also study the Lukacs property of the free gamma distribution.
\end{abstract}
\maketitle
\section{Introduction}
The celebrated Lukacs theorem \cite{Lukacs} in the classical probability theory gives a characterization of independent random variables $X$ and $Y$ which have the Gamma distribution $G(a,p)$ and $G(a,q)$ respectively, by independence of $V=X+Y$ and $U=\frac{X}{X+Y}$. By the Gamma distribution $G(a,p)$ we mean here a probability distribution given by a probability density function
\begin{align*}
\frac{a^{p}}{\Gamma(p)}x^{p-1}e^{-ax}\mathit{I}_{(0,\infty)}(x),\,\mbox{   } a,p>0
\end{align*}

This result was generalized in many directions. An important direction of such investigations was relaxing the assumptions of independence of $U$ and $V$. The same characterization holds true when instead of assuming independence, constancy of the first and second conditional moment of $U$ given by $V$ is assumed, see \cite{BolHark} or \cite{LahaLukacs} for more general so called Laha-Lukacs regressions. One can also consider other powers of $U$ (see eg. \cite{WesolGamma}). \\
It is easy to see that $U$ defined as above has the Beta distribution of the first kind $\beta_{I}(p,q)$, where by the Beta distribution we mean probability distribution with the density
\begin{align*}
\frac{x^{p-1}(1-x)^{q-1}}{\beta(p,q)}\mathit{I}_{(0,1)}(x).
\end{align*}
Another way of generalizing Lukacs theorem are so called dual Lukacs regressions introduced in \cite{BobWes2002Dual}, where authors proved that if $U$ and $V$ are independent and such that $i$th and $j$th conditional moment of $Y=V(1-U)$ given by $X=UV$ are constant for $(i,j)\in\{(-1,-2),(-1,1),(1,2)\}$ then $U$ has a Beta distribution and $V$ has a Gamma distribution.

Lukacs property was also studied in context of random matrices, where it turns out that this property characterizes the Wishart distribution (see \cite{CasalisLetac96,BobWes2002,BoutHassMass,OlkinRubin62,OlkinRubin64, LetacMassam, LetacWes2011}).

In the present paper we study analogues of the Lukacs property in free probability. Free probability and notion of free independence of non-commutative random variables was introduced by Voiculescu in \cite{VoiculescuAdd}. It turns out during the development of the theory that classical and free probability are deeply related. One of the links between theories is so called asymptotic freeness, which roughly speaking says that large independent random matrices under the state being expectation of normalized trace are close to free random variables (see \cite{VoiculescuLimit,SpeicherNC}). Another fact which gives relation between this two theories are Berkovici Pata bijections (see \cite{BerkoviciPata}) which give bijection between infinitely divisible measures under classical and free convolution. Also an important link is given in \cite{SpeicherNC,NicaSpeicherLect} where it is proved that free cumulants can be define using lattice of non-crossing partitions while classical cumulants are define using lattice of all partitions.

It was also noticed in the literature that characterizations of independent and free random variables are closely related. However this relationship is not completely understood.  There are many examples of classical characterizations which have free counterparts. A basic example is Bernstein theorem \cite{Bernstein} which characterizes the normal distribution by independence of $X+Y$ and $X-Y$ for independent $X$ and $Y$. It's free analogue was proved in \cite{NicaChar} and says that if $\X$ and $\Y$ are free then $\X+\Y$ and $\X-\Y$ are free if and only if $\X$ and $\Y$ have a semicircular distribution.

Lukacs theorem was also studied in context of  free probability. It turns out that in context of characterizations the role of the Gamma distribution in the free probability plays a Marchenko-Pastur distribution also known as a free Poisson distribution. In \cite{BoBr2006} authors proved a free analogue of Laha-Lukacs regressions, they described family of free Meixner distributions by assumptions on two first conditional moments of $\X$ given by $\X+\Y$, one of the cases of Laha-Lukacs regressions is free analogue of Lukacs regressions. Laha-Lukacs regressions and related characterizations were also studied in \cite{EjsmontLL,EjsmontMP,EjsmontTD}. Dual Lukacs regressions in the free probability were studied in our previous works \cite{SzpojanWesol,SzpDLRNeg}. \\
Lukacs property in free probability is well studied, but there is a significant gap in the study of the free Lukacs property, namely there is no proof that for free $\X$ and $\Y$ free Poisson distributed, random variables $\U=(\X+\Y)^{-\frac{1}{2}}\X(\X+\Y)^{-\frac{1}{2}}$ and $\V=\X+\Y$ are free. In Section 3 we give a proof of this fact.

We have to note that the closest result in this direction was proved in \cite{CaCa} where authors proved asymptotic freeness of $\bf U=(X+Y)^{-\frac{1}{2}}X(X+Y)^{-\frac{1}{2}}$ and $\bf V=X+Y$ for $\bf X, Y$ independent, complex Wishart distributed. Using results from \cite{CaCa} in \cite{SzpojanWesol} a result in the opposite direction is proved: for $\U$ free Binomial distributed and $\V$ free Poisson distributed random variables $\X=\V^{\frac{1}{2}}\U\V^{\frac{1}{2}}$ and $\Y=\V^{\frac{1}{2}}(\I-\U)\V^{\frac{1}{2}}$ are free.

Here we do not use asymptotic freeness technique for the proof of free Lukacs property of the free Poisson distribution. Our proof is based on direct calculation of joint cumulants of $\U$ and $\V$. The computation involves joint cumulants of $\X$ and $\X^{-1}$ for the free Poisson distributed $\X$, for which we find closed formula.
The form of joint cumulants of $\X$ and $\X^{-1}$ leads also to a new characterization of the free Poisson distribution.

We also prove that if $\X$ is free Poisson distributed then $\X^{-1}$ belongs to the free Gamma family defined in \cite{BoBr2006}. This observation together with knowledge of joint cumulants of $\X$ and $\X^{-1}$ allows us to give an answer to a question raised in \cite{BoBr2006}, if for free, positive, identically free gamma distributed $\X$ and $\Y$, random variables $\V=\X+\Y$ and $\U=(\X+\Y)^{-1}\X^2(\X+\Y)^{-1}$ are free. The answer turns out to be negative.

The paper is organized as follows: in Section 2 we give basics of the free probability, Section 3 is devoted to find joint cumulants of $\X$ and $\X^{-1}$ for free Poisson distributed $\X$. We also prove in this section a characterization of the free Poisson distribution closely related to the formula for joint cumulants of $\X$ and $\X^{-1}$. In Section 4 we study Lukacs property of free Poisson and free Gamma distributions.
\section{Preliminary}
In this section we will give basic definitions and facts necessary for understanding the results, for more comprehensive introduction to the free probability consult \cite{NicaSpeicherLect} or \cite{VoiDykNica}.\\

A non-commutative $*$-probability space is a pair $(\mathcal{A},\varphi)$, where $\mathcal{A}$ is a unital algebra over $\C\,$ and $\varphi:\mathcal{A}\to\C$ is a linear functional satisfying $\phi(\I)=1$ and $\varphi\left(\X^*\X\right)\geq 0$, if $\mathcal{A}$ is a $C^*$-algebra then $(\mathcal{A},\varphi)$ is called $C^*$-probability space. Any element $\X$ of $\mathcal{A}$ is called a (non-commutative) random variable.

The $*$-distribution $\mu$ of a self-adjoint element $\X\in\mathcal{A}\subset\mathcal{B}(H)$ is a probabilistic measure on $\R$ such that $$\varphi(\X^r)=\int_{\R}\,t^r\,\mu(dt)\qquad \forall\,r=1,2,\ldots$$

Unital subalgebras $\mathcal{A}_i\subset \mathcal{A}$, $i=1,\ldots,n$, are said to be freely independent if $\varphi(\X_1,\ldots,\X_k)=0$ for $\X_j\in \mathcal{A}_{i(j)}$, where $i(j)\in\{1,\ldots,n\}$, such that $\varphi(\X_j)=0$, $j=1,\ldots,k$, if neighbouring elements are from different subalgebras, that is $i(1)\ne i(2)\ne \ldots \ne i(k-1)\ne i(k)$. Similarly, random variables $\X,\,\Y\in\mathcal{A}$ are free (freely independent) when subalgebras generated by $(\X,\,\I)$ and $(\Y,\,\I)$ are freely independent (here $\I$ denotes identity operator).

Let $\chi=\{B_1,B_2,\ldots\}$ be a  partition of the set of numbers $\{1,\ldots,k\}$. A partition $\chi$ is a crossing partition if there exist distinct blocks $B_r,\,B_s\in\chi$ and numbers $i_1,i_2\in B_r$, $j_1,j_2\in B_s$ such that $i_1<j_1<i_2<j_2$. Otherwise $\chi$ is called a non-crossing partition. The set of all non-crossing partitions of $\{1,\ldots,k\}$ is denoted by $NC(k)$.

For any $k=1,2,\ldots$, (joint) cumulants of order $k$ of non-commutative random variables $\X_1,\ldots,\X_n$ are defined recursively as $k$-linear maps $\mathcal{R}_k:\mathcal{A}^k \to\C\,$ through equations
\begin{align}
\label{cum_def}
\varphi(\X_1\ldots\X_m)=\sum_{\chi\in NC(m)}\,\prod_{B\in\chi}\,\mathcal{R}_{|B|}(\X_i,\,i\in B)
\end{align}
holding for any $m=1,2,\ldots$,
with $|B|$ denoting the size of the block $B$.

Freeness can be characterized in terms of behaviour of cumulants in the following way: Consider unital subalgebras $(\mathcal{A}_i)_{i\in I}$ of an algebra $\mathcal{A}$ in a non-commutative probability space $(\mathcal{A},\,\varphi)$. Subalgebras $(\mathcal{A}_i)_{i\in I}$ are freely independent iff for any $n=2,3,\ldots$ and for any $\X_j\in\mathcal{A}_{i(j)}$ with $i(j)\in I$, $j=1,\ldots,n$ any $n$-cumulant
$$
\mathcal{R}_n(\X_1,\ldots,\X_n)=0
$$
if there exists a pair $k,l\in\{1,\ldots,n\}$ such that $i(k)\ne i(l)$.

In particular we will need a result saying that joint cumulants of $\I$ which is the unit of the algebra $\mathcal{A}$ with any other non-commutative random variable vanish (see \cite{NicaSpeicherLect}, Proposition 11.15).
\begin{proposition}
\label{zero_ident}
Let $(\mathcal{A},\varphi)$ be non-comutative probability space, and let $\X_1,\ldots,\X_n\in\mathcal{A},$ $n\geq 2$. Then we have
\begin{align*}
\mathcal{R}_n\left(\X_1,\ldots,\X_n\right)=0
\end{align*}
whenever there exists $k\in\{1,\ldots,n\}$ s.t. $\X_k=\I$.
\end{proposition}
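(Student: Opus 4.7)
The plan is to argue by induction on $n\geq 2$, taking the defining moment-cumulant relation (\ref{cum_def}) as the only input. Specializing (\ref{cum_def}) to $m=1$ first gives $\mathcal{R}_1(\I)=\varphi(\I)=1$, a fact I will use repeatedly. For the base case $n=2$ I can assume without loss of generality that $\X_2=\I$; expanding $\varphi(\X_1)=\varphi(\X_1\I)$ via (\ref{cum_def}) then yields $\mathcal{R}_2(\X_1,\I)+\mathcal{R}_1(\X_1)=\varphi(\X_1)$, so $\mathcal{R}_2(\X_1,\I)=0$.

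For the inductive step I fix $n\geq 3$ with $\X_k=\I$ for some index $k$, and assume the statement holds for all $2\leq m<n$. I expand $\varphi(\X_1\cdots\X_n)$ via (\ref{cum_def}) and group the partitions $\pi\in NC(n)$ by the block $B^*\in\pi$ containing $k$. When $1<|B^*|<n$ the corresponding factor $\mathcal{R}_{|B^*|}(\X_i,\,i\in B^*)$ has order strictly less than $n$ and has $\I$ among its arguments, hence vanishes by the inductive hypothesis. The partition $\pi=1_n$ contributes exactly $\mathcal{R}_n(\X_1,\ldots,\X_n)$, and the only other surviving partitions are those in which $\{k\}$ appears as a singleton block.

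Deleting that singleton is a bijection between such partitions and $NC(n-1)$ (non-crossingness is automatic, since a singleton cannot participate in any crossing), and since $\mathcal{R}_1(\I)=1$ the value of the corresponding product over the blocks is unchanged. Hence the sum of these surviving terms is, by (\ref{cum_def}) applied in $NC(n-1)$, the moment $\varphi(\X_1\cdots\X_{k-1}\X_{k+1}\cdots\X_n)$; this equals $\varphi(\X_1\cdots\X_n)$ because $\X_k=\I$. The moment-cumulant expansion therefore collapses to $\varphi(\X_1\cdots\X_n)=\mathcal{R}_n(\X_1,\ldots,\X_n)+\varphi(\X_1\cdots\X_n)$, which gives $\mathcal{R}_n(\X_1,\ldots,\X_n)=0$. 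The only mildly delicate step in this plan is verifying the bijection with $NC(n-1)$ and checking that the moment-cumulant formula in $NC(n-1)$ matches the regrouped sum; once that is in hand, the rest is direct bookkeeping.
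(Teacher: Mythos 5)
Your proof is correct, and it is essentially the standard argument: the paper does not prove this statement itself but cites Proposition 11.15 of Nica--Speicher, whose proof is exactly your induction (split the moment--cumulant sum according to the block containing $k$, kill the intermediate blocks by the inductive hypothesis, and match the singleton-block terms with the moment formula on $n-1$ variables via $\mathcal{R}_1(\I)=1$). The bijection you flag as the delicate step is indeed fine, since adjoining or deleting a singleton block cannot create or destroy a crossing.
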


A non-commutative random variable $\X$ is said to be free-Poisson distributed if it has Marchenko-Pastur(or free-Poisson) distribution $\nu=\nu(\lambda,\alpha)$ defined by the formula
\be\label{MPdist}
\nu=\max\{0,\,1-\lambda\}\,\delta_0+\min\{1,\lambda\} \tilde{\nu},
\ee
where $\lambda\ge 0$ and the measure $\tilde{\nu}$, supported on the interval $(\alpha(1-\sqrt{\lambda})^2,\,\alpha(1+\sqrt{\lambda})^2)$, $\alpha>0$ has the density (with respect to the Lebesgue measure)
$$
\tilde{\nu}(dx)=\frac{1}{2\pi\alpha x}\,\sqrt{4\lambda\alpha^2-(x-\alpha(1+\lambda))^2}\,dx.
$$
The parameters $\lambda$ and $\alpha$ are called the rate and the jump size, respectively.

Assume that $\X$ has free Poisson distribution with $\lambda>1$, note that in this case $\X$ is invertible since it's spectrum do not contain 0. One can easily find that the probability density function of the distribution of $\X^{-1}$ is given by
\begin{align}
\label{free_pois_inv}
f_{\X^{-1}}(x)=\frac{(\lambda-1)\sqrt{\frac{4\lambda}{\alpha^2(\lambda-1)^4}-\left(x-\frac{\lambda+1}{\alpha(\lambda-1)^2}\right)^2}}
{2\pi x^2}\mathit{I}_{\left(\frac{1}{\alpha(1+\sqrt{\lambda})^2},\frac{1}{\alpha(1-\sqrt{\lambda})^2}\right)}(x).
\end{align}
A random variable $\Y$ defined by $\Y=\alpha(\lambda-1)^{\frac{3}{2}}\X^{-1}-(\lambda-1)^{\frac{1}{2}}\I$ has the standardized free gamma law $\mu_{2a,a^2}$ defined in \cite{BoBr2006}, where $a=\frac{1}{\sqrt{\lambda-1}}$. To see that one can compare the probability density function of $\Y$ and the probability density function of the standardized free gamma distribution (page 62 in \cite{BoBr2006}). Cumulants of the standardized free gamma law are (see Remark 5.7 in \cite{BoBr2006})  $\mathcal{R}_1(\mu_{2a,a^2})=0$, $\mathcal{R}_k(\mu_{2a,a^2})=C_{k-1}a^{k-2}$ for $k\geq 2$, where $C_k,\,k\geq$ 1 are Catalan numbers. By multilinearity of cumulants we obtain that cumulants of $\X^{-1}$ are equal to
\begin{align}
\label{free_gamma_cumulants}
\mathcal{R}_k(\X^{-1})=C_{k-1}\frac{1}{\alpha^k(\lambda-1)^{2k-1}},\,  k\geq 1.
\end{align}
Catalan numbers are defined by
\begin{align}
\label{Cat1}
C_k=\frac{1}{k+1}{2k\choose k},\, k\geq 0
\end{align}
They can be also equivalently defined by the recurrence $C_0=1$
\begin{align}
\label{Cat2}
C_k=\sum_{i=1}^kC_{i-1}C_{k-i}.
\end{align}

By traciality of $\varphi$ one can easily proved the following lemma.
\begin{lemma}
\label{tr_cum}
Let $(\mathcal{A},\varphi)$ be *-probability space. Assume that $\varphi$ is tracial, then for any $n\in\mathbb{N}$
\begin{align*}
\mathcal{R}_n\left(\X_1,\ldots,\X_n\right)=\mathcal{R}_n\left(\X_n,\X_1,\ldots,\X_{n-1}\right).
\end{align*}
\end{lemma}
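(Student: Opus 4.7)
The plan is to proceed by induction on $n$, with the case $n=1$ being immediate. For the inductive step, apply the moment-cumulant formula \eqref{cum_def} to both moments $\varphi(\X_1\cdots\X_n)$ and $\varphi(\X_n\X_1\cdots\X_{n-1})$, and in each expansion separate off the contribution of the single-block partition $1_n\in NC(n)$. This writes each moment as $\mathcal{R}_n(\cdot)$ plus a correction sum $S$ over $\chi\in NC(n)\setminus\{1_n\}$ of products $\prod_{B\in\chi}\mathcal{R}_{|B|}(\cdot)$ of cumulants of order strictly smaller than $n$. Traciality of $\varphi$ equates the two moments, so the lemma reduces to showing that the two correction sums coincide.

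To match the correction sums, introduce the cyclic rotation $\sigma\colon j\mapsto j-1$ on $\{1,\ldots,n\}$ (with $\sigma(1)=n$). Viewing a partition of $\{1,\ldots,n\}$ as a partition of $n$ points arranged on a circle, the non-crossing property is manifestly preserved by $\sigma$, so $\sigma$ descends to a bijection of $NC(n)$ fixing $1_n$. For every $\chi\in NC(n)\setminus\{1_n\}$ and every block $B\in\chi$ one has $|B|<n$, so the inductive hypothesis provides cyclic invariance of $\mathcal{R}_{|B|}$. With $\Y_j:=\X_{j-1}$ (and $\Y_1:=\X_n$), a brief check of indexing yields
\[
\prod_{B\in\chi}\mathcal{R}_{|B|}(\Y_i:i\in B)=\prod_{B'\in\sigma(\chi)}\mathcal{R}_{|B'|}(\X_i:i\in B'),
\]
and reindexing via $\chi\mapsto\sigma(\chi)$ identifies the two correction sums, which closes the induction.

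The only genuine technical point is the verification at the unique block $B\in\chi$ containing the index $1$: for such a block the standard-order cumulant evaluated on the rotated variables places $\X_n$ first, and one needs exactly one cyclic shift, supplied by the inductive hypothesis since $|B|<n$, to restore standard order on the rotated block $\sigma(B)$. Blocks not containing the index $1$ are handled by the rotation without any reordering. Thus the wraparound at the block containing $1$ is the single place where cyclic invariance of lower-order cumulants is genuinely needed; the rest is routine bookkeeping on non-crossing partitions.
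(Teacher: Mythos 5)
Your proof is correct. The paper itself offers no argument for this lemma (it is stated with the remark that it follows easily from traciality), and your induction --- peeling off the $1_n$ term from the moment--cumulant formula for $\varphi(\X_1\cdots\X_n)$ and $\varphi(\X_n\X_1\cdots\X_{n-1})$, then matching the remaining sums via the rotation bijection of $NC(n)$ together with cyclic invariance of the lower-order cumulants at the block containing $1$ --- is exactly the standard way to make that remark precise; the only cosmetic point is that at the wraparound block you need the shift moving the first entry to the end, which is the inverse (equivalently, an iterate) of the shift as stated in the lemma, and this is covered by the inductive hypothesis.
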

In the next lemma we will denote by $\sigma_n$ partition of the set $\{1,2,\ldots,n\}$, which consists of $n-1$ blocks of the form $\{(1,2),(3),\ldots,(n)\}$, so first and second element are in the same block, and all other elements are singletons and by $\mathit{1}_n$ partition $\{(1,2,\ldots,n)\}$. This lemma is Theorem 11.12 form \cite{NicaSpeicherLect} rewritten for partition $\sigma_n$.
\begin{lemma}
\label{kumulant_product}
Let $(\mathcal{A},\varphi)$ be *-probability space, and $\X_1,\X_2,\ldots,\X_n\in\mathcal{A}$.\\
Then
\begin{align}
\mathcal{R}_n(\X_1\cdot \X_2,\X_3,\ldots, \X_{n+1})=\sum_{\pi\in NC(n+1):\, \pi \vee \sigma_{n+1} = \mathit{1}_{n+1}}\mathcal{R}_{\pi}(\X_1,\X_2,\ldots,\X_{n+1}).
\end{align}
It can be rewritten as
\begin{align}
\mathcal{R}_n(\mathbb{X}_1\cdot \X_2,\X_3,\ldots,\mathbb{X}_{n+1})=&\sum_{i=1}^{n-1}\mathcal{R}_i\left(\mathbb{X}_2,\mathbb{X}_3,\ldots,\X_{i+1}\right)
\Rr_{n+1-i}
\left(\X_1,\X_{i+2},\ldots,\mathbb{X}_{n+1}\right)\\
\nonumber+&\Rr_n(\X_2,\ldots,\X_{n+1})\Rr_1(\X_1)\\
\nonumber+&\mathcal{R}_{n+1}\left(\mathbb{X}_1,\mathbb{X}_2,\ldots,\mathbb{X}_{n+1}\right).
\end{align}
\end{lemma}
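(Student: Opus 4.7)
The first displayed identity is exactly Theorem 11.12 of \cite{NicaSpeicherLect} specialized to the partition $\sigma_{n+1}$, so it requires no independent argument. The content of the lemma therefore reduces to enumerating those $\pi\in NC(n+1)$ for which $\pi\vee\sigma_{n+1}=\mathit{1}_{n+1}$ and writing $\mathcal{R}_\pi$ explicitly for each.

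Since $\sigma_{n+1}$ has only one non-singleton block, namely $\{1,2\}$, the join $\pi\vee\sigma_{n+1}$ either coincides with $\pi$ (when $1$ and $2$ already share a block of $\pi$) or is obtained by merging the two blocks of $\pi$ that contain $1$ and $2$. This dichotomy organizes the sum into two cases.

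In the first case, $\pi$ contains $1$ and $2$ in a single block, so $\pi\vee\sigma_{n+1}=\pi$ and the requirement $\pi=\mathit{1}_{n+1}$ forces exactly one contribution, namely $\mathcal{R}_{n+1}(\X_1,\ldots,\X_{n+1})$. In the second case, $\pi$ has $1\in B_1$ and $2\in B_2$ with $B_1\ne B_2$; the merged partition equals $\mathit{1}_{n+1}$ precisely when $\pi=\{B_1,B_2\}$, i.e.\ $\pi$ has exactly two blocks. I would then claim that non-crossingness forces $B_2=\{2,3,\ldots,i+1\}$ and $B_1=\{1,i+2,\ldots,n+1\}$ for some $i\in\{1,\ldots,n\}$. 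Indeed, if $B_2$ were not the initial interval starting at $2$, there would be $k\in B_1$ and $j\in B_2$ with $2<k<j$, and together with $1\in B_1$ this yields the crossing pattern $1<2<k<j$ alternating between $B_1$ and $B_2$. For each such $\pi$, reading arguments in the natural order of indices,
\begin{align*}
\mathcal{R}_\pi(\X_1,\ldots,\X_{n+1})=\mathcal{R}_i(\X_2,\ldots,\X_{i+1})\,\mathcal{R}_{n+1-i}(\X_1,\X_{i+2},\ldots,\X_{n+1}).
\end{align*}
Isolating $i=n$ (giving $\mathcal{R}_n(\X_2,\ldots,\X_{n+1})\mathcal{R}_1(\X_1)$) from $i\in\{1,\ldots,n-1\}$ and adding the contribution of the first case reproduces the second identity.

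\textbf{Main obstacle.} The argument is essentially bookkeeping once Theorem 11.12 is invoked; the only place requiring genuine thought is the planarity step showing that the two-block partitions separating $1$ from $2$ are of the claimed interval-and-complement form. Aside from that, each step is mechanical.
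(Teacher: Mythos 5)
Your proposal is correct and follows the same route as the paper, which simply cites Theorem 11.12 of Nica--Speicher for the first identity and states the rewritten form without further argument. Your enumeration of the two-block non-crossing partitions separating $1$ from $2$ (forcing $B_2=\{2,\ldots,i+1\}$ and $B_1=\{1,i+2,\ldots,n+1\}$) correctly supplies the bookkeeping the paper leaves implicit.
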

\section{Joint cumulants of $\X$ and $\X^{-1}$}
In this section we will find joint cumulants of $\X$ and $\X^{-1}$ for an invertible, free Poisson distributed non-commutative random variable $\X$. This result will be the main tool in proof of the Lukacs property of the free Poisson distribution in Section 4. Additionally we will also prove characterization of the free Poisson distribution related to the joint cumulants of $\X$ and $\X^{-1}$
\begin{proposition}
\label{kumulanty}
Let $\X$ have free Poisson distribution with $\lambda>1$, and $\alpha=1$. Assume that $m\geq 1$ then
\begin{align}
\mathcal{R}_{i_1+\ldots+i_m+m}(\X^{-1},\underbrace{\X,\ldots,\X}_{i_1},\X^{-1},
\underbrace{\X,\ldots,\X}_{i_2},\X^{-1},\ldots,\X^{-1},\underbrace{\X,\ldots,\X}_{i_m})=\\
\nonumber
\begin{cases} 0 \,& if \,  \exists k\in \{1,\ldots,m\},\, i_k>1 \\
(-1)^{i_1+\ldots+i_m}\mathcal{R}_{m}(\X^{-1}) & if\, \forall k\in\{1,\ldots,m\},\, i_k\leq 1.
\end{cases}
\end{align}
\end{proposition}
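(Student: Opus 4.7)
The plan is to argue by strong induction on $n=m+i_1+\cdots+i_m$, using the identity $\X^{-1}\X=\I$ as the engine of the reduction. For $n\geq 3$, Proposition \ref{zero_ident} gives
\[ 0 = \mathcal{R}_{n-1}(\I,Y_3,\ldots,Y_n) = \mathcal{R}_{n-1}(\X^{-1}\X,Y_3,\ldots,Y_n), \]
and Lemma \ref{kumulant_product} expands the right-hand side to
\begin{align*}
\mathcal{R}_n(\X^{-1},\X,Y_3,\ldots,Y_n) & = -\mathcal{R}_1(\X^{-1})\,\mathcal{R}_{n-1}(\X,Y_3,\ldots,Y_n)\\
 & \quad -\sum_{i=1}^{n-2}\mathcal{R}_i(\X,Y_3,\ldots,Y_{i+1})\,\mathcal{R}_{n-i}(\X^{-1},Y_{i+2},\ldots,Y_n),
\end{align*}
expressing the target cumulant in terms of cumulants of strictly lower order.

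To close the induction I would strengthen the statement to cover every cumulant $\mathcal{R}_k(Y_1,\ldots,Y_k)$ with $Y_j\in\{\X,\X^{-1}\}$, not only those beginning with $\X^{-1}$. By Lemma \ref{tr_cum} such a cumulant depends only on its cyclic word; writing $r$ and $s$ for the numbers of $\X^{-1}$'s and $\X$'s, the strengthened claim is that the cumulant vanishes whenever some cyclic run of consecutive $\X$'s has length $\geq 2$, and equals $(-1)^s\mathcal{R}_r(\X^{-1})$ otherwise (when $r\geq 1$). The extremal cases $s=0$ and $r=0$ reduce to (\ref{free_gamma_cumulants}) and to $\mathcal{R}_k(\X)=\lambda$ for free Poisson with $\alpha=1$; the direct calculation $\mathcal{R}_2(\X^{-1},\X)=1-\lambda/(\lambda-1)=-1/(\lambda-1)$ supplies the base $n=2$.

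In the inductive step, given a mixed cumulant with $r,s\geq 1$, I would cyclically rotate (Lemma \ref{tr_cum}) so that an $\X^{-1},\X$ pair sits at positions $(1,2)$—always possible since both types are present—apply the recursion above, and replace each smaller cumulant by the value predicted by the induction hypothesis. If some cyclic run of $\X$'s of length $\geq 2$ survives intact in at least one of the resulting arcs, the corresponding smaller cumulant vanishes by induction; the remaining terms must then be shown to cancel, producing $0$ in the vanishing regime and $(-1)^s\mathcal{R}_r(\X^{-1})$ in the short-run regime.

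The main obstacle is this final combinatorial check. The non-vanishing case reduces to summation identities of the form $\sum_{j=1}^{\ell}C_{j-1}C_{\ell-j}=C_\ell$, namely the Catalan recursion (\ref{Cat2}), which recovers the factor $C_{r-1}$ of (\ref{free_gamma_cumulants}); the contributions carrying $\mathcal{R}_k(\X)=\lambda$ must telescope and cancel, and the powers of $(\lambda-1)^{-1}$ coming from smaller $\X^{-1}$ cumulants have to align across terms. I expect the systematic bookkeeping of subcases—parametrised by where long runs of $\X$'s sit relative to the collapsed pair $\X^{-1},\X$—to be what makes the argument delicate.
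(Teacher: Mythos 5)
Your proposal follows essentially the same route as the paper's proof: strong induction on the total length, the identity $0=\mathcal{R}_{n-1}(\I,\ldots)=\mathcal{R}_{n-1}(\X^{-1}\X,\ldots)$ expanded by Lemma \ref{kumulant_product}, traciality (Lemma \ref{tr_cum}) to rotate an $\X^{-1},\X$ pair to the front, the base computation $\mathcal{R}_2(\X^{-1},\X)=-1/(\lambda-1)$, and the Catalan recursion \eqref{Cat2} to close the non-vanishing case. The cancellations you defer are exactly the paper's two cases and do go through as you anticipate: in the vanishing regime the two surviving terms cancel because $\mathcal{R}_k(\X)=\lambda$ is independent of $k$ while the inductive hypothesis gives $\mathcal{R}_{n-k+1}(\X^{-1},\X,\ldots)=-\mathcal{R}_{n-k}(\X^{-1},\ldots)$, and in the no-long-run regime the sum collapses via $\sum_{i}C_{i-1}C_{n-k-2-i}=C_{n-k-2}$.
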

\begin{proof}
Let $n$ be the length of cumulant i.e. in the above cumulant $n=i_1+\ldots+i_m+m$. We will prove the Proposition by induction over $n$. For cumulant of the length $1$ it is clear that Proposition holds true. Since the inductive step will work for cumulants of length greater than or equal to $3$, we have to check if the proposition holds true for cumulants of length 2. For $\Rr_2\left(\X^{-1},\X^{-1}\right)$ lemma is obviously true. It is enough to check that $\Rr_2\left(\X^{-1},\X\right)=-\Rr_1\left(\X^{-1}\right)$. By Lemma \ref{kumulant_product} we obtain
\begin{align*}
1=\Rr_1(\I)=\Rr_1\left(\X^{-1}\X\right)=
\Rr_2\left(\X^{-1},\X\right)+\Rr_1\left(\X^{-1}\right)\Rr_1\left(\X\right)=
\Rr_2\left(\X^{-1},\X\right)+\frac{\lambda}{\lambda-1}.
\end{align*}
From the above equation we see that $\Rr_2\left(\X^{-1},\X\right)=-\frac{1}{\lambda-1}=-\Rr_1\left(\X^{-1}\right)$.

Assume that proposition holds true for $l\leq n-1$.
Note that by the Lemma \ref{tr_cum} joint cumulant of variables $\X$ and $\X^{-1}$ can be transformed to the form, where first two variables are respectively $\X^{-1},\X$, it is impossible only in the case when only $\X^{-1}$ appears in the cumulant. In that case proposition is obviously true.

To prove the proposition we will consider two cases.\\
\textbf{Case 1}.\\
We will prove that a cumulant in which there is at least one pair of neighbouring $\X$'s is equal to zero. We may assume that last (reading from left) pair of neighbouring $\X$'s is at positions $k, k+1$, where $k=2,3,\ldots,n-1$. We will prove that $$\mathcal{R}_n\left(\X^{-1},\X,\ldots,\underbrace{\X,\X}_{k,k+1},\ldots\right)=0$$.

From Proposition \ref{zero_ident} we get
\begin{align*}
0=\mathcal{R}_{n-1}(\I,\ldots,\X,\X,\ldots)=\mathcal{R}_{n-1}(\X^{-1}\X,\ldots,\X,\X,\ldots)=\mathcal{R}_{n-1}(\Y_1\cdot \Y_2,\Y_3,\ldots,\Y_{n}).
\end{align*}
Now we expand right hand side of the above equation using the Lemma \ref{kumulant_product}
\begin{align*}
0=\sum_{i=1}^{n-2}&\mathcal{R}_i\left(\Y_2,\Y_3,\ldots,\Y_{i+1}\right)\mathcal{R}_{n-i}
\left(\Y_1,\Y_{i+2},\ldots,\Y_n\right)\\+&\Rr_{n-1}(\Y_2,\ldots,\Y_n)\Rr_1(\Y_1)+\mathcal{R}_{n}(\Y_1,\ldots,\Y_n).\end{align*}
Note that for $i\leq k-2$ cumulant $\Rr_{n-i}
\left(\Y_1,\Y_{i+2},\ldots,\Y_n\right)$ contains $\Y_1=\X^{-1}$, $\Y_k=\X$ $\Y_{k+1}=\X$, so by the inductive assumption is equal to zero.\\
It is left to prove that
\begin{align}
\label{kum_zero}
\mathcal{R}_{n}(\Y_1, \Y_2,\Y_3,\ldots,\Y_{n})=&-\sum_{i=k-1}^{n-2}\mathcal{R}_i\left(\Y_2,\Y_3,\ldots,\Y_{i+1}\right)
\Rr_{n-i}\left(\Y_1,\Y_{i+2},\ldots,\Y_n\right)\\&-\Rr_{n-1}(\Y_2,\ldots,\Y_n)\Rr_1(\Y_1)
\nonumber\\=&0\nonumber.
\end{align}
We will consider two cases:
\begin{itemize}
\item \textbf{a} --- there is  $j\in\{3,\ldots,k-1\}$ such that $\Y_j=\X^{-1}$,
\item \textbf{b} --- for all $j\in\{3,\ldots,k-1\}$ $\Y_j=\X$.
\end{itemize}

\textbf{Case a}. In the case when there is $j\in\{3,\ldots,k-1\}$ such that$\Y_j=\X^{-1}$, the cumulant $\mathcal{R}_i\left(\Y_2,\Y_3,\ldots,\Y_{i+1}\right)$ for $i=k-1$ contains $\Y_2=\X,\, \Y_j=\X^{-1}$ and $\Y_k=\X$. So by the Lemma \ref{tr_cum} and the inductive assumption this cumulant is equal to zero. For $i\in \{k,k+1,\ldots,n-1\}$ the cumulant $\mathcal{R}_i\left(\Y_2,\Y_3,\ldots,\Y_{i+1}\right)$ contains $\Y_j=\X^{-1},\, \Y_k=\X,\, \Y_{k+1}=\X$, so by the inductive assumption are equal to zero, which completes the proof in the case \textbf{a}.

\textbf{Case b}. Assume that for all $j\in\{3,\ldots,k-1\}$ we have $\Y_j=\X$, when $n> k+1$ (which means that the last pair of neghbouring $\X$ is not at positions $n-1,n$) then we have $\Y_{k+2}=\X^{-1}$, otherwise the chosen pair of $\X$ would not be the last pair of neighbouring $\X$. From this we have that for $i\geq k+1$ the cumulant $\mathcal{R}_i\left(\Y_2,\Y_3,\ldots,\Y_{i+1}\right)$ contains $\Y_k=\X,\,\Y_{k+1}=\X$ and $\Y_{k+2}=\X^{-1}$, so by the inductive assumption and the Lemma \ref{tr_cum} the cumulant $\mathcal{R}_i\left(\Y_2,\Y_3,\ldots,\Y_{i+1}\right)$ is equal to zero. \\

It is left to prove that in the case $n> k+1$ we have
\begin{align*}
\mathcal{R}_{n}(\Y_1,
\Y_2,\Y_3,\ldots,\Y_{n})=-\sum_{i=k-1}^{k}\mathcal{R}_i\left(\Y_2,\Y_3,\ldots,\Y_{i+1}\right)
\Rr_{n-i}
\left(\Y_1,\Y_{i+2},\ldots,\Y_n\right)=0,
\end{align*}
and in the case $n=k+1$ that
\begin{align*}
\mathcal{R}_{n}(\Y_1,
\Y_2,\Y_3,\ldots,\Y_{n})=&-\mathcal{R}_{n-2}\left(\Y_2,\Y_3,\ldots,\Y_{n-1}\right)
\Rr_{2}
\left(\Y_1,\Y_n\right)\\&-\Rr_{n-1}(\Y_2,\ldots,\Y_n)\Rr_1(\Y_1)=0.
\end{align*}
Both cases can be proved similarly. For all $j\in\{2,\ldots,k+1\}$ we have $\Y_j=\X$ and by the assumption $\X$ has free Poisson distribution with parameters $(\lambda,1)$, so cumulants of $\X$ are constant, which means $\mathcal{R}_{k-1}\left(\Y_2,\Y_3,\ldots,\Y_{k}\right)=
\mathcal{R}_{k}\left(\Y_2,\Y_3,\ldots,\Y_{k+1}\right)$. Moreover note that $\Y_1=\X^{-1}$, $\Y_{k+1}=\X$, and on positions $k+1,\ldots,n$ there are no neighbouring $\X$, then by the inductive assumption
$\Rr_{n-k+1}
\left(\Y_1,\Y_{k+1},\ldots,\Y_n\right)=-\Rr_{n-k}
\left(\Y_1,\Y_{k+2},\ldots,\Y_n\right)$ (in the case $n=k+1$ the right hand side equals $-\mathcal{R}_1\left(\X^{-1}\right)$). So the above sum is equal to zero, which completes the proof in the Case 1.

\textbf{Case 2}.\\
We will prove the lemma in case when there are no neighbouring $\X$'s in the cumulant, of course there might be neighbouring $\X^{-1}$. Without loss of generality we may assume that except for the position 2, $\X$ appears in the cumulant exactly $k\leq\lfloor\frac{n}{2}\rfloor-1$ times, on positions $i_1,i_2,\ldots,i_k$, where $4\leq i_1<\ldots<i_k\leq n$ and $\forall j\in\{1,2,\ldots,k-1\}$ we have $i_{j+1}-i_{j}>1$.\\
Taking into account the formula \eqref{free_gamma_cumulants} which gives cumulants of $\X^{-1}$ , we have to prove that $$\mathcal{R}_{n}(\X^{-1},\X,\X^{-1},\ldots)=\frac{(-1)^{k+1}}{(\lambda-1)^{2(n-k-1)-1}}C_{n-k-2}.$$
For $i\in\{1,\ldots,n\}$ by $\Y_i$ we denote the variable on the $i$th position. We proceed similarly as in the previous case,
\begin{align*}
0=&\mathcal{R}_{n-1}(\I,\X^{-1}\ldots,)=\mathcal{R}_{n-1}(\X^{-1}\X,\X^{-1}\ldots,)=\mathcal{R}_{n-1}(\Y_1\Y_2,\ldots,\Y_n)=
\\&\mathcal{R}_n(\Y_1,\Y_2,\ldots,\Y_n)
+\mathcal{R}_1(\Y_2)\mathcal{R}_{n-1}(\Y_1,\Y_3,\ldots,\Y_n)
+\\&\sum_{i=2}^{n-2}\mathcal{R}_i(\Y_2,\Y_3,\ldots,\Y_{i+1})\mathcal{R}_{n-i}(\Y_1,\Y_{i+2},\Y_{i+3},\ldots,\Y_n)
+\Rr_{n-1}(\Y_2,\ldots,\Y_n)\Rr_1(\Y_1).
\end{align*}
Note that $\Y_2=\X$ and $\Y_j=\X$ for $j\in\{i_1,\ldots,i_k\}$. From the fact that in the initial cumulant there were no neighbouring $\X$ and by the inductive assumption, we obtain that terms of the above sum for $i+1\in\{i_1,\ldots,i_k\}$ are equal to $0$, similarly $\Rr_{n-1}(\Y_2,\ldots,\Y_n)\Rr_1(\Y_1)$ is equal zero when $\Y_n=\X$. All other terms are non-zero.\\
Note that $k+1$ variables from $\Y_1,\Y_2,\ldots,\Y_n$ are equal $\X$ and $n-k-1$ are equal $\X^{-1}$.\\
From the above remarks and the inductive assumption we get
\begin{align*}
0=&\mathcal{R}_n(\Y_1,\Y_2,\ldots,\Y_n)+(-1)^k\mathcal{R}_1(\X)\mathcal{R}_{n-k-1}\left(\X^{-1}\right)+
\sum_{i=1}^{n-k-2}(-1)^{k+1}\mathcal{R}_{i}\left(\X^{-1}\right)\mathcal{R}_{n-k-1-i}\left(\X^{-1}\right)\\
=&\mathcal{R}_n(\Y_1,\Y_2,\ldots,\Y_n)+\frac{(-1)^k\lambda}{(\lambda-1)^{2(n-k-1)-1}}C_{n-k-2}+
\sum_{i=1}^{n-k-2}(-1)^{k+1}\frac{C_{i-1}C_{n-k-i-2}}{(\lambda-1)^{2i-1+2(n-k-1-i)-1}}\\
=&\mathcal{R}_n(\Y_1,\Y_2,\ldots,\Y_n)+\frac{(-1)^k\lambda}{(\lambda-1)^{2(n-k-1)-1}}C_{n-k-2}+
\frac{(-1)^{k+1}}{(\lambda-1)^{2(n-k-1)-2}}\sum_{i=1}^{n-k-2}C_{i-1}C_{n-k-2-i}\\
=&\mathcal{R}_n(\Y_1,\Y_2,\ldots,\Y_n)+\frac{(-1)^k\lambda}{(\lambda-1)^{2(n-k-1)-1}}C_{n-k-2}+
\frac{(-1)^{k+1}}{(\lambda-1)^{2(n-k-1)-2}}C_{n-k-2}\\
=&\mathcal{R}_n(\Y_1,\Y_2,\ldots,\Y_n)+\frac{(-1)^{k}}{(\lambda-1)^{2(n-k-1)-1}}C_{n-k-2}.
\end{align*}
In the equation one before last we used recurrence for Catalan numbers \eqref{Cat2}. \\
From the above equation we see that $\mathcal{R}_n(\Y_1,\Y_2,\ldots,\Y_n)=\frac{(-1)^{k+1}}{(\lambda-1)^{2(n-k-1)-1}}C_{n-k-2}$, which completes the proof of the lemma.
\end{proof}
The next lemma gives characterization of the invertible, free Poisson distributed random variable in language of joint cumulants of $\X$ and $\X^{-1}$.
\begin{remark}
\label{uwazka}
If $\X$ is invertible and such that for $n\geq 1$ we have
\begin{align}
\label{uw_kum}
\Rr_{n+1}\left(\X,\X^{-1},\X^{-1},\ldots,\X^{-1}\right)=-\Rr_{n}\left(\X^{-1}\right),
\end{align}
then $\Rr_n\left(\X^{-1}\right)=\frac{1}{\left(\Rr_1(\X)-1\right)^{2n-1}}C_{n-1}$  for $n\geq 1$.\\
In particular $\X^{-1}$ has the free Gamma distribution and $\X$ has the free Poisson distribution.
\end{remark}
\begin{proof}
We will prove the remark inductively. Using the asumption and Lemma \ref{kumulant_product} we obtain
\begin{align*}
1=\Rr_1(\I)=\Rr_1\left(\X\X^{-1}\right)=\Rr_1(\X)\Rr\left(\X^{-1}\right)+
\Rr_2\left(\X,\X^{-1}\right)=\Rr\left(\X^{-1}\right)\left(\Rr_1(\X)-1\right),
\end{align*}
hence $\Rr_1\left(\X^{-1}\right)=\frac{1}{\Rr_1(\X)-1}$.\\
Assume that the remark holds true for $k\leq n-1$. Using respectively: the Lemma \ref{kumulant_product}, the assumption, the inductive assumption and the recurrence for the Catalan numbers \eqref{Cat2} we obtain
\begin{align*}
0&=\Rr_n\left(\X\X^{-1},\X^{-1},\ldots,\X^{-1}\right)=\\
&\Rr_{n+1}\left(\X,\X^{-1},\ldots,\X^{-1}\right)+\Rr_1(\X)\Rr_n\left(\X^{-1}\right)+
\sum_{i=1}^{n-1}\Rr_i\left(\X^{-1}\right)\Rr_{n+1-i}\left(\X,\X^{-1},\ldots,\X^{-1}\right)=\\
&\left(\Rr_1(\X)-1\right)\Rr_n\left(\X^{-1}\right)
-\sum_{i=1}^{n-1}\Rr_i\left(\X^{-1}\right)\Rr_{n-i}\left(\X^{-1}\right)=
\\ &\left(\Rr_1(\X)-1\right)\Rr_n\left(\X^{-1}\right)-\frac{1}{(\Rr_1(\X)-1)^{2n-2}}
\sum_{i=1}^{n-1}C_{i-1}C_{n-1-i}=\\
&\left(\Rr_1(\X)-1\right)\Rr_n\left(\X^{-1}\right)-\frac{1}{(\Rr_1(\X)-1)^{2n-2}}C_{n-1}.
\end{align*}
Which means that $\Rr_n\left(\X^{-1}\right)=\frac{1}{\left(\Rr_1(\X)-1\right)^{2n-1}}C_{n-1}$.
\end{proof}
\section{The Lukacs property in free probability}
Before we start the proof of the Lukacs theorem, we need to introduce so called Kreweras complement on non-crossing partition and useful formula for computing cumulants of products of free random variables. The following definition and theorem can be found in \cite{NicaSpeicherLect} (def. 9.21, th. 14.4), see also
\cite{Kreweras}.
\begin{definition}
Let $\pi$ be a partition of set $\{1,2,...,n\}$. Consider set $\{1,\overline{1},2,\overline{2},\ldots,n,\overline{n}\}$.
The Kreweras complement of the partition $\pi$ denoted by $K(\pi)$ is the biggest partition $\sigma\in NC(\overline{1},\overline{2},\ldots,\overline{n})$ such that $\pi\cup\sigma\in NC(1,\overline{1},2,\overline{2},\ldots,n,\overline{n})$.
\end{definition}
For example if $\pi=\{(1,2),(3,4)\}$, then $K(\pi)=\{(\overline{1}),(\overline{2},\overline{4})(\overline{3})\}.$
Partitions $\pi$ and $K(\pi)$ are illustrated below.
\\
 \setlength{\unitlength}{.20cm}
 \begin{equation*}
 \begin{picture}(-10,8)

\put(-15,3){\circle*{1}} \put(-15,3){\line(0,1){3}}

\put(-13,3){\circle*{1}}

\put(-11,3){\circle*{1}} \put(-11,3){\line(0,1){3}}

\put(-9,3){\circle*{1}}

\put(-7,3){\circle*{1}}\put(-7,3){\line(0,1){3}}

\put(-5,3){\circle*{1}}

\put(-3,3){\circle*{1}}\put(-3,3){\line(0,1){3}}

\put(-1,3){\circle*{1}}

\put(-15,6){\line(1,0){4}}

\put(-7,6){\line(1,0){4}}

\put(-15.5,0.5){1}
\put(-13.5,0.5){$\overline{1}$}
\put(-11.5,0.5){2}
\put(-9.5,0.5){$\overline{2}$}
\put(-7.5,0.5){3}
\put(-5.5,0.5){$\overline{3}$}
\put(-3.5,0.5){4}
\put(-1.5,0.5){$\overline{4}$}

\put(4,3){\circle*{1}} \put(4,3){\line(0,1){3}}

\put(8,3){\circle*{1}} \put(8,3){\line(0,1){5}}

\put(12,3){\circle*{1}}\put(12,3){\line(0,1){3}}

\put(16,3){\circle*{1}}\put(16,3){\line(0,1){5}}

\put(8,8){\line(1,0){8}}

\put(3.5,0.5){$\overline{1}$}

\put(7.5,0.5){$\overline{2}$}

\put(11.5,0.5){$\overline{3}$}

\put(15.5,0.5){$\overline{4}$}

 \end{picture}
 \end{equation*}

\begin{theorem}
\label{cum_prod}
Let $\{\X_1,\ldots,\X_n\}$ and $\{\Y_1,\ldots,\Y_n\}$ be free, then
\begin{align}
\mathcal{R}_n(\X_1\Y_1,\X_2\Y_2,\ldots,\X_n\Y_n)=\sum_{\pi\in NC(n)}R_{\pi}\left(\X_1,\X_2,\ldots,\X_n\right)R_{K(\pi)}\left(\Y_1,\Y_2,\ldots,\Y_n\right).
\end{align}
\end{theorem}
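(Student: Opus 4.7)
The plan is to begin from the general cumulant-of-products formula (Theorem 11.12 of Nica--Speicher, of which the paper's Lemma \ref{kumulant_product} is just the one-product special case). Viewing each factor $\X_i\Y_i$ as a two-fold product and grouping the $2n$ entries into consecutive pairs via the interval partition $\hat\sigma = \{\{1,2\},\{3,4\},\ldots,\{2n-1,2n\}\}\in NC(2n)$, that theorem yields
\begin{align*}
\Rr_n(\X_1\Y_1,\ldots,\X_n\Y_n) \;=\; \sum_{\substack{\pi\in NC(2n)\\ \pi\vee\hat\sigma=1_{2n}}} \Rr_\pi\bigl(\X_1,\Y_1,\X_2,\Y_2,\ldots,\X_n,\Y_n\bigr).
\end{align*}

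Next I would bring in the freeness hypothesis. Since $\{\X_1,\ldots,\X_n\}$ is free from $\{\Y_1,\ldots,\Y_n\}$, all mixed free cumulants vanish, so the only surviving $\pi$ are those whose every block lies entirely inside the set of odd positions (the $\X$'s) or entirely inside the set of even positions (the $\Y$'s). Each such $\pi$ decomposes uniquely as $\pi = \pi_\X \sqcup \pi_\Y$, with $\pi_\X$ a non-crossing partition of the odd positions $\{1,3,\ldots,2n-1\}$ and $\pi_\Y$ a non-crossing partition of the even positions $\{2,4,\ldots,2n\}$. Relabelling the odd positions by $\{1,\ldots,n\}$ and the even positions by $\{\overline 1,\ldots,\overline n\}$, the surviving summand factors as $\Rr_{\pi_\X}(\X_1,\ldots,\X_n)\,\Rr_{\pi_\Y}(\Y_1,\ldots,\Y_n)$.

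The combinatorial crux is then to show that, for fixed $\pi_\X\in NC(n)$, the two conditions (i) $\pi_\X\sqcup\pi_\Y$ is non-crossing on the interlaced set $\{1,\overline 1,2,\overline 2,\ldots,n,\overline n\}$ and (ii) $(\pi_\X\sqcup\pi_\Y)\vee\hat\sigma = 1_{2n}$ together force $\pi_\Y = K(\pi_\X)$. This is precisely the content of the definition of the Kreweras complement: $K(\pi_\X)$ is by definition the maximum non-crossing completion of $\pi_\X$ on the bar positions, and maximality is exactly what forces the join with $\hat\sigma$ to collapse to the one-block partition. Once this identification is in hand, the sum reindexes as $\sum_{\pi\in NC(n)}\Rr_\pi(\X_1,\ldots,\X_n)\Rr_{K(\pi)}(\Y_1,\ldots,\Y_n)$, which is the asserted formula. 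The main obstacle is this final combinatorial equivalence: one must check both that $\pi_\X\sqcup K(\pi_\X)$ is always connected across each adjacent pair $(2i-1,2i)$ through $\hat\sigma$, so that the join really reaches $1_{2n}$, and conversely that any strictly coarser $\pi_\Y$ destroys non-crossingness while any strictly finer one leaves the join below $1_{2n}$. Both directions are routinely handled by a diagrammatic argument or a short induction on $n$.
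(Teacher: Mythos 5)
The paper offers no proof of this statement: it is quoted verbatim as Theorem 14.4 of Nica and Speicher (together with Definition 9.21 for the Kreweras complement), so there is nothing internal to compare your argument against. Your outline is, in fact, the standard proof from that reference: pass to the products-as-entries formula over $\pi\in NC(2n)$ with $\pi\vee\hat\sigma=1_{2n}$ (the paper's Lemma \ref{kumulant_product} is the one-product special case of the same Theorem 11.12), use vanishing of mixed cumulants to reduce to partitions of the form $\pi_\X\sqcup\pi_\Y$, and identify the survivors with pairs $(\pi_\X,K(\pi_\X))$. The structure is correct. The one caveat is that the step you call "precisely the content of the definition of the Kreweras complement" is not quite that: the definition gives maximality of $K(\pi_\X)$ among $\pi_\Y$ with $\pi_\X\sqcup\pi_\Y$ non-crossing, whereas what you need is the equivalence of $\pi_\Y=K(\pi_\X)$ with the join condition $(\pi_\X\sqcup\pi_\Y)\vee\hat\sigma=1_{2n}$, and this genuinely requires an argument --- for instance the block count $\left|(\pi_\X\sqcup\pi_\Y)\vee\hat\sigma\right|=1+|\pi_\Y|-|K(\pi_\X)|$ for $\pi_\Y\le K(\pi_\X)$, which is $1$ exactly when $\pi_\Y=K(\pi_\X)$. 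You correctly flag this as the remaining obstacle, so the proposal is a sound skeleton with an acknowledged (and fillable) combinatorial lemma left to verify, exactly as in the cited source.
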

Now we are ready to prove the following theorem, which is the main result of the paper.
\begin{theorem}
\label{lukacs}
Let $(\mathcal{A},\varphi)$ be $C^*$-probability space and $\varphi$ be faithful, tracial state. Let $\X,\Y\in\mathcal{A}$ be free, both free Poisson distributed with parameters $(\lambda,\alpha)$ and $(\kappa,\alpha)$ respectively, where $\lambda+\kappa>1$, then random variables $\U=(\X+\Y)^{-1/2}\X(\X+\Y)^{-1/2}$ and $\V=\X+\Y$ are free.
\end{theorem}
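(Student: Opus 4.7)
The strategy is to verify freeness of $\U$ and $\V$ via the cumulant criterion. First, a standard additivity computation shows that $\V=\X+\Y$ is itself free Poisson distributed with parameters $(\lambda+\kappa,\alpha)$: by multilinearity of free cumulants and the vanishing of mixed cumulants for free pairs,
\[
\mathcal{R}_n(\V)=\mathcal{R}_n(\X)+\mathcal{R}_n(\Y)=(\lambda+\kappa)\alpha^n.
\]
Since $\lambda+\kappa>1$, $\V$ is invertible, and after rescaling we may take $\alpha=1$, placing us in the setting where Proposition~\ref{kumulanty} furnishes every joint cumulant of $\V$ and $\V^{-1}$.

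Next I would pass to a more tractable pair using traciality. From
\[
\U^c=\V^{-1/2}(\X\V^{-1})^{c-1}\X\V^{-1/2},
\]
traciality of $\varphi$ yields $\varphi(\U^{c_1}\V^{b_1}\cdots\U^{c_k}\V^{b_k})=\varphi(\W^{c_1}\V^{b_1}\cdots\W^{c_k}\V^{b_k})$, where $\W=\X\V^{-1}$, so $(\U,\V)$ and $(\W,\V)$ have identical joint $*$-distributions. By faithfulness of $\varphi$ it is then enough to show that $\W$ and $\V$ are free; this reformulation is advantageous because $\W$ is literally a product of $\X$ with the function $\V^{-1}$ of $\V$, making the cumulant-of-products machinery of Lemma~\ref{kumulant_product} applicable.

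The core task is then to establish the vanishing of every mixed cumulant $\mathcal{R}_N(\W^{c_1},\V^{b_1},\ldots,\W^{c_k},\V^{b_k})$. I would expand each $\W^c=(\X\V^{-1})^c$ into its $2c$ atomic factors and iterate Lemma~\ref{kumulant_product}, turning the mixed cumulant into a finite sum over non-crossing partitions $\pi$ of products of cumulants $\mathcal{R}_{|B|}$ whose arguments lie in $\{\X,\V,\V^{-1}\}$. The blocks split into three classes: pure-$\X$ blocks, which contribute the constant $\lambda$ by the free Poisson assumption with $\alpha=1$; pure blocks in $\V^{\pm 1}$, evaluated by Proposition~\ref{kumulanty}; and mixed blocks. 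A block containing $\X$ together with positive powers of $\V$ but no $\V^{-1}$ collapses to a pure-$\X$ block by writing $\V=\X+\Y$, expanding multilinearly, and invoking the vanishing of mixed cumulants of the free pair $(\X,\Y)$.

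The principal obstacle is the treatment of blocks containing both $\X$ and $\V^{-1}$, since $\V^{-1}$ is not a polynomial in $\X$ and $\Y$ and the freeness trick above does not reduce them directly. My plan here is an induction on the total length $N$ of the cumulant, modeled on the proof of Proposition~\ref{kumulanty}: insert the identity $\I=\V\V^{-1}$ at a strategic boundary of the cumulant, apply Lemma~\ref{kumulant_product} to peel off partial sums, and recognise the resulting recursion as the Catalan identity~\eqref{Cat2}. The telescoping cancellation that drives Case~2 in the proof of Proposition~\ref{kumulanty} should lift to this enriched setting once $\X$ factors are absorbed via the pure-$\X$ collapse described above, forcing the mixed cumulant to vanish and thereby establishing freeness of $\W$ and $\V$, hence of $\U$ and $\V$.
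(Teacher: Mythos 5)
Your reduction step is fine as far as it goes, but it steers the argument into territory where the paper's tools no longer apply, and the step you defer to the end is precisely the whole difficulty. You replace $\U$ by $\mathbb{W}=\X\V^{-1}$ with $\V=\X+\Y$; this is a legitimate traciality reduction (and would have the advantage of requiring only $\V$, not $\X$, to be invertible). But $\X$ and $\V^{-1}$ are \emph{not} free, so neither Theorem \ref{cum_prod} nor Proposition \ref{kumulanty} says anything about the ``blocks containing both $\X$ and $\V^{-1}$'' that your classification produces. Proposition \ref{kumulanty} computes joint cumulants of a free Poisson element \emph{with its own inverse}: it gives you $\Rr(\V,\ldots,\V^{-1},\ldots)$, but not $\Rr(\X,\V^{-1},\ldots)$, and computing the latter is essentially equivalent in difficulty to the theorem itself. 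Your proposed fix --- insert $\I=\V\V^{-1}$ and run an induction ``modeled on'' Case 2 of Proposition \ref{kumulanty} --- does not close: that induction works there because every argument lies in the commutative algebra generated by a single free Poisson element, whose cumulants are constant and whose joint cumulants with its inverse are already controlled at lower order; with $\X$ interleaved, the recursion produced by Lemma \ref{kumulant_product} involves unknown lower-order joint cumulants of the non-free triple $(\X,\V,\V^{-1})$, and nothing forces the telescoping you hope for. As written this is a plan, not a proof.

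The paper turns in the opposite direction at exactly this fork: assuming first $\lambda>1$, it passes to $\U^{-1}$ and, by traciality, to $\X^{-1}\V=\I+\X^{-1}\Y$, so that after expanding each occurrence of $\V=\X+\Y$ every argument of every cumulant is a product of one element of the algebra generated by $\X$ (namely $\X^{-1}$, $\X$ or $\I$) with one element of the algebra generated by $\Y$ (namely $\Y$ or $\I$). These two algebras \emph{are} free, so Theorem \ref{cum_prod} splits each cumulant into an $\X$-side factor $\Rr_{\pi}(\X^{-1},\X,\ldots)$, given in closed form by Proposition \ref{kumulanty}, and a $\Y$-side factor $\Rr_{K(\pi)}(\Y,\ldots)$; the signs $(-1)^{j}$ supplied by Proposition \ref{kumulanty} then annihilate the sum through $\sum_{j=0}^{m}\binom{m}{j}(-1)^{j}=0$. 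The remaining range $\lambda\le 1<\lambda+\kappa$ is recovered by an analytic-continuation argument in $(\lambda,\kappa)$, a step your framing would avoid if it worked. To salvage your route you would have to prove a closed formula for the joint cumulants of $\X$ and $(\X+\Y)^{-1}$ from scratch; otherwise, switch to the paper's decomposition, where the inverse is attached to a single free Poisson variable.
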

\begin{proof}
First we note that it is enough to prove that mixed cumulants of $\U,\V$ vanish. Since $\alpha$ is a multiplicative constant and cumulants are multilinear, we can assume that $\alpha=1$. First we will prove the theorem with an additional assumption that $\X$ is invertible, which means that $\lambda>1$. In this case we can equivalently prove freeness of $\U^{-1}=(\X+\Y)^{1/2}\X^{-1}(\X+\Y)^{1/2}$ and $\U$. Since $\varphi$ is tracial then any moment $\varphi(P(\U^{-1},\V))$, where $P(x,y)$ is a non-commutative polynomial in variables $x$ and $y$, can be rewritten as $\varphi(P(\mathbb{W}\,,\V))$, where $\mathbb{W}=\X^{-1}(\X+\Y)$. From this and the definition of free cumulants we have that vanishing of joint cumulants of $\U^{-1}=(\X+\Y)^{1/2}\X^{-1}(\X+\Y)^{1/2}$ and $\V=\X+\Y$ is equivalent to vanishing of mixed cumulants of $\X^{-1}(\X+\Y)=\I+\X^{-1}\Y$ and $\X+\Y$. Because of the fact, that any joint cumulant containing $\I$ and any other random variable equals $0$, to prove the theorem (in the case $\lambda>1$) it is enough to prove that all mixed cumulants of random variables $\widetilde{\U}=\X^{-1}\Y$ and $\V=\X+\Y$ vanish.\\
Fix $n\geq 2$ and some mixed cumulant of random variables $\widetilde{\U},\V$ of length $n$. By the Lemma \ref{tr_cum} we can change the order of variables in the cumulant, in order to have $\widetilde{\U}$ at the first position. Without loss of generality we may assume that in this cumulant $\V$ appears $k<n$ times, on positions (after change of the order) $1\leq j_1<j_2<\ldots<j_k\leq n$.
\begin{align}
\label{tw_expand}
\mathcal{R}_n\left(\widetilde{\U},\ldots,\underbrace{\V}_{j_1},\ldots,\underbrace{\V}_{j_k},\ldots\right)=\sum_{Z_{j_1},\ldots,
\Z_{j_k}\in\{\X,\Y\}}\mathcal{R}_n\left(\widetilde{\U},\ldots,\Z_{j_1},\ldots,\Z_{j_k},\ldots\right)
\end{align}
In the first step of the proof we will find these terms of the above sum which are equal to 0.\\
Note that if we  write variables $\X$ and $\Y$ as $\,\X\I,\,\I\Y$ respectively, then we can apply theorem \ref{cum_prod} to the above cumulants.\\
Consider now cumulant of the form $\mathcal{R}_n(\ldots,\X\I,\I\Y,\ldots)$,
expanding it according to the Theorem \ref{cum_prod} we see that either $\X$ and $\I$ are in the same block of partition $\pi$, or $\I$ and $\Y$ are in the same block of partition $K(\pi)$. In both cases the cumulant is equal to 0 by the Proposition \ref{zero_ident}.\\
Fix now $l\geq 2$ and consider kumulant of the length $n>l+1$ which contains $\X^{-1}\Y$ and sequence of random variables consisting of $l$ neighbouring $\X$ and maybe other sequences of $\X\I,\I\Y,\X^{-1}\Y$. From the previous consideration to be non-zero this cumulant must be of the form $\mathcal{R}_n(\ldots,\underbrace{\X\I}_{1\,\overline{1}},
\ldots,\underbrace{\X\I}_{l\,\overline{l}},\underbrace{\X^{-1}\Y}_{l+1\,\overline{l+1}},\ldots).$
Again we will expand this cumulant according to the Theorem \ref{cum_prod}. Since a joint cumulant of $\I$ and any other random variable equals 0, then if in the  partition $K(\pi)$ numbers $\overline{1},\ldots,\overline{l}$ are not singletons, then the above cumulant is equal to zero. Assume that in $K(\pi)$ $\overline{1},\ldots,\overline{l}$ are singletons, then in partition $\pi$ positions $1,2,\ldots,l+1$ are in the same block, in this case one of the cumulant related to $\pi$ contains at least two neighbouring $\X$'s and $\X^{-1}$ so by the Proposition \ref{kumulanty} this cumulant is equal to zero.

From the above remarks we conclude that the only non-zero cumulants in the sum
\eqref{tw_expand} are these, which consists only of $\Y$ and $\X^{-1}\Y$ or if they contain $\X$ at the position $j\in\{2,\ldots,n-1\}$ (after using the Lemma \ref{tr_cum}) then $\X^{-1}\Y$ is at the position $j+1$ is and at the position $j-1$ can not be $\X$.

Recall that in equation \eqref{tw_expand} $\X^{-1}\Y$ appears $n-k$ times. Some of $\X^{-1}\Y$ may have as a left neighbour another $\X^{-1}\Y$, we may assume that $m\leq n-k$ of them do not have as a left neighbour $\X^{-1}\Y$. Above remarks imply that if $\Z_j$ does not have as a right neighbour $\X^{-1}\Y$ then $\Z_j=\Y$, in other case the cumulant is equal to zero. This means that $\X$ can appear only on $m$ positions which are left neighbours of $\X^{-1}\Y$. So we can rewrite the right hand site of \eqref{tw_expand} as
\begin{align}
\label{expand1}
\sum_{Z_{i_1},\ldots,
\Z_{i_m}\in\{\X\I,\I\Y\}}\mathcal{R}_n\left(\X^{-1}\Y,\I\Y,\ldots,\I\Y,\Z_{i_1},
\X^{-1}\Y,\I\Y\ldots,\I\Y,\Z_{i_j},\X^{-1}\Y,\ldots\I\Y,\Z_{i_m}\right).
\end{align}
Fix one term of the above sum, without loss of generality we can assume that $j\in\{0,1,\ldots,m\}$ of $\{\Z_{i_1},\ldots,\Z_{i_m}\}$ are equal $\X\I$. We will expand such term according to the Theorem \ref{cum_prod}
\begin{align}
\label{expand2}
\mathcal{R}_n\left(\X^{-1}\Y,\ldots,\Z_{i_1},\X^{-1}\Y,\ldots,\Z_{i_j},\X^{-1}\Y,\ldots,\Z_{i_m}\right)=
\sum_{\pi\in NC(n)}\Rr_\pi(\X^{-1},\ldots)\Rr_{K(\pi)}(\Y,\ldots).
\end{align}

Note that if $\Z_{i_j}=\X\I$ then $\overline{i_j}$ is singleton in $K(\pi)$ or this cumulant vanishes by the Lemma \ref{zero_ident}. From this we see that $i_j$ and $i_{j+1}$ are in the same block of partition $\pi$, which means that the number related to the position of $\X$ is in the same block with the number related to the position of $\X^{-1}$. Moreover by previous steps in considered cumulant there are no neighbouring $\X$.

Similarly if in the fixed term of the sum \eqref{expand2} $\Z_{i_j}=\I\Y$, then in $\pi$ element $i_j$ is a singleton or this cumulant is equal to 0.

The above remarks leads to a conclusion that partitions $\pi$ for which right hand side of \eqref{expand2} is not equal to zero, can be identified with partitions on $n-k$ element related to the positions of $\X^{-1}$. Other elements are singletons or are in the same block with one of elements related to the position of $\X^{-1}$. A mapping defined below formalizes this observation.

Recall that $\X^{-1}\Y$ appears $n-k$ times in cumulant on the left hand side of \eqref{expand2}, so in $\Rr_{\pi}\left(\X^{-1},\ldots\right)$ variable $\X^{-1}$ appears also $n-k$ times. Let us denote the positions of $\X^{-1}\Y$ by $l_1,\ldots,l_{n-k}$.\\
For any partition $\pi\in NC(n)$ we define $\widetilde{\pi}\in NC(n-k)$ by restriction of $\pi$ to the numbers $l_1,\ldots,l_{n-k}$. This means that $s,t\in\{1,\ldots,n-k\}$ are in the same block of $\widetilde{\pi}$ if and only of $l_s$ and $l_t$ are in the same block of $\pi$.\\
Let $A=\{\pi\in NC(n): \Rr_{\pi}\left(\X^{-1},\ldots\right)\neq 0\,\mbox{and}\,\Rr_{K(\pi)}\left(\Y,\ldots\right)\neq 0\}$. From the previous remarks it follows that the mapping $\pi\to\widetilde{\pi}$ is bijection between $A$ and $NC(n-k)$.
We will prove that
\begin{align}
\label{mapA}
\sum_{\pi\in A}\Rr_\pi(\X^{-1},\ldots)\Rr_{K(\pi)}(\Y,\ldots)=
\sum_{\widetilde{\pi}\in NC(n-k)}(-1)^j\Rr_{\widetilde{\pi}}(\underbrace{\X^{-1},\ldots\X^{-1}}_{n-k})\Rr_{K(\widetilde{\pi})}(\underbrace{\Y,\ldots,\Y}_{n-k}).
\end{align}
If $\pi\in A$ and in $\Rr_\pi\left(\X^{-1},\ldots\right)$ variable $\X$ appears $j$ times then from the Proposition \ref{kumulanty} we get that $\Rr_\pi(\X^{-1},\ldots)=(-1)^j\Rr_{\widetilde{\pi}}(\underbrace{\X^{-1},\ldots\X^{-1}}_{n-k})$.

Recall that in the sum \eqref{expand1} some of $\Z_j$'s was fixed to be $\I\Y$. From this we obtain that after applying the Lemma \ref{tr_cum} every maximal sequence of neighbouring $\I\Y$ has as a left neighbour $\X^{-1}\Y$. In particular if in the left hand side of \eqref{expand1} there is sequence of $l$ neighbouring $\I\Y$ then it must be of the form $\Rr_n\left(\ldots,\X^{-1}\Y,\underbrace{\I\Y,\ldots,\I\Y}_l,\ldots\right)$. It is clear that for $\pi\in A$ positions of $\I$ from $\I\Y$ are singletons, from this it follows that in $K(\pi)$ positions of $\Y$ from $\I\Y$ are in the same block with some $\Y$ coming from $\X^{-1}\Y$, this implies that number of block in $K(\pi)$ is equal to number of block of $K(\widetilde{\pi})$. Since $\Y$ has the free Poisson distribution with $\alpha=1$, then cumulants of $\Y$ are constant. This means that $\Rr_{K(\pi)}(\Y,\ldots)$ depends only on number of blocks in $K(\pi)$. It proves that $\Rr_{K(\pi)}(\Y,\ldots)=\Rr_{K\left(\widetilde{\pi}\right)}(\underbrace{\Y,\ldots,\Y}_{n-k}).$\\
This proves the equation \eqref{mapA}.

Let us return to the sum \eqref{tw_expand}, we can rewrite it summing over possible number of $\X\I$ which gives us
\begin{align*}
\sum_{j=0}^{m}\sum_{\widetilde{\pi}\in NC(n-k)}{m\choose j} (-1)^{j}\mathcal{R}_{\widetilde{\pi}}(\X^{-1},\ldots,\X^{-1})R_{K(\widetilde{\pi})}(\Y,\ldots,\Y),
\end{align*}
after changing the order summation we obtain
\begin{align*}
\sum_{\widetilde{\pi}\in NC(n-k)} \mathcal{R}_{\widetilde{\pi}}(\X^{-1},\ldots,\X^{-1})R_{K(\widetilde{\pi})}(\Y,\ldots,\Y)
\sum_{j=0}^{m}{m\choose j} (-1)^{j}=0.
\end{align*}
Which completes the proof in case $\lambda>1$ and $\kappa>0$.\\
Assume now that $\lambda+\kappa>1$ and $\alpha>0$.\\
Note that the equation \eqref{cum_def} defines cumulants recursively by moments. Any moment of the random variables $\V$ and $\U$ by traciality of $\varphi$ can be expressed as a moment of $\X+\Y$, $\X$, $(\X+\Y)^{-1}$. Since $\X+\Y$ has free Poisson distribution with parameters $(\lambda+\kappa,\alpha)$, and $\lambda+\kappa>1$, then the support of the distribution of $\X+\Y$ is $[\alpha(1-\sqrt{\lambda+\kappa})^2,\alpha(1+\sqrt{\lambda+\kappa})^2]$. As it was pointed at the beginning of the proof it is enough to prove freeness of $\U$ and $\V$ for some fixed $\alpha$ and freeness for other values of $\alpha$ follows from multilinearity of free cumulants. We fix $\alpha>0$ such that the support of the distribution of the random variable $\I-(\X+\Y)$ is contained in $(-1,1)$. Since the support of a random variable is equal to the spectrum and the spectral norm is equal to the norm we have $||\I-(\X+\Y)||<1$ and $(\X+\Y)^{-1}=\sum_{n=0}^\infty \left(\I-(\X+\Y)\right)^n$. This means that any joint moment of $\X+\Y,$ $\X,$ $(\X+\Y)^{-1}$ can be expressed as a series of moments of $\X+\Y$ and $\X$. Moreover by freeness of $\X$ and $\Y$ any joint moment of $\X+\Y$ and $\X$ is a polynomial of moments of $\X$ and moments of $\Y$. Taking into account that moments of free Poisson distribution with parameters $(\lambda,\alpha)$ are the same polynomials in $\lambda,\alpha$ for $\lambda\leq1$ and for $\lambda> 1$, we conclude that cumulants of $\U$ and $\V$ are power series of $\lambda,\alpha,\kappa$, so they are the same analytic function of $\lambda,\kappa$ for $\lambda\leq 1$ and for $\lambda> 1$. This completes the proof.
\end{proof}
Recall that Proposition $\ref{kumulanty}$ gives joint cumulants of $\X$ and $\X^{-1}$ where $\X$ has free Poisson distribution. As it was shown for a positive $\X$ with the free Poisson distribution, $\X^{-1}$ has a free Gamma distribution with the density \eqref{free_pois_inv}. Thus we can also use Proposition $\ref{kumulanty}$ (with swapped roles of $\X$ and $\X^{-1}$) to prove the following result.
\begin{proposition}
\label{free_gamma}
Let $\X$ and $\Y$ be free, identically distributed. Assume that the distribution of $\X$ is free Gamma then $\X+\Y$ and $(\X+\Y)^{-1}\X^2(\X+\Y)^{-1}$ are not free.
\end{proposition}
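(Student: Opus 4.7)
The plan is to exhibit a single mixed cumulant of $\U$ and $\V$ that does not vanish. The simplest useful choice is $\Rr_2(\U^{-1}, \V)$: since $\U$ is positive and invertible, $\U^{-1}$ lies in the $C^*$-subalgebra generated by $\U$, so freeness of $\U$ and $\V$ would force $\Rr_2(\U^{-1}, \V) = 0$. It therefore suffices to check that this cumulant is non-zero for at least one admissible value of the free Gamma parameter.

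Using traciality of $\varphi$ together with $\U^{-1} = \V\X^{-2}\V$, a cyclic rearrangement gives
\begin{align*}
\Rr_2(\U^{-1}, \V) = \varphi(\X^{-2}\V^3) - \varphi(\V)\,\varphi(\X^{-2}\V^2),
\end{align*}
which is polynomial in $\X, \Y, \X^{-1}$ (no inverse of $\V$ survives). The strategy is then: (i) expand $\V^k = (\X+\Y)^k$ so that each $\varphi(\X^{-2}\V^k)$ becomes a finite sum of mixed moments $\varphi(\X^{-2}w)$ with $w$ a word in $\X, \Y$; (ii) use freeness of $\X$ and $\Y$ to split each such mixed moment into products of joint moments or cumulants in $\X, \X^{-1}$ on one side and cumulants of $\Y$ on the other; (iii) evaluate the remaining $\X, \X^{-1}$ cumulants by applying Proposition \ref{kumulanty} with the roles of $\X$ and $\X^{-1}$ interchanged, which is legitimate because $\X^{-1}$ is free Poisson whenever $\X$ is free Gamma; and (iv) substitute $\Rr_k(\X) = C_{k-1}/(\lambda-1)^{2k-1}$ and $\Rr_k(\X^{-1}) = \lambda$ (for $\alpha = 1$) to obtain an explicit rational function of $\lambda$.

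The main obstacle is the single genuinely alternating term $\varphi(\X^{-2}\Y\X\Y)$ arising in the expansion of $\varphi(\X^{-2}\V^3)$: its evaluation via freeness requires the joint cumulant $\Rr_2(\X^{-2}, \X)$, which is not listed by Proposition \ref{kumulanty} directly. I would unpack it by writing $\X^{-2} = \X^{-1}\cdot\X^{-1}$ and applying Lemma \ref{kumulant_product}, which expresses $\Rr_2(\X^{-2}, \X)$ as a sum of cumulants of the form $\Rr_n(\X^{-1}, \ldots, \X^{-1}, \X)$ that Proposition \ref{kumulanty} does handle after a cyclic shift via Lemma \ref{tr_cum}. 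Once the rational function of $\lambda$ has been assembled, non-vanishing is confirmed by substituting a concrete admissible value (for instance $\lambda = 2$), which yields a strictly positive number and completes the argument.
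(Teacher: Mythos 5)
Your reduction and your target are the same as the paper's: you pass to $\U^{-1}=\V\X^{-2}\V$ and aim to show that the single mixed cumulant $\Rr_2(\U^{-1},\V)=\varphi(\X^{-2}\V^3)-\varphi(\V)\varphi(\X^{-2}\V^2)$ is non-zero. The paper evaluates exactly this quantity, but via the factorization $\V\X^{-2}\V=(\I+\Y\X^{-1})(\I+\X^{-1}\Y)$ together with Lemma \ref{kumulant_product}, which lets Proposition \ref{zero_ident} and Proposition \ref{kumulanty} kill most terms at once, whereas you expand $(\X+\Y)^k$ into words and evaluate a dozen mixed moments by freeness. That route is workable, and your ``main obstacle'' is in fact no obstacle: $\varphi(\X^{-2}\Y\X\Y)$ reduces by the standard alternating-moment formula for free variables to $\varphi(\X^{-2}\X)=\varphi(\X^{-1})$, $\varphi(\X^{-2})$, $\varphi(\X)$ and moments of $\Y$, so the detour through $\Rr_2(\X^{-2},\X)$ via Lemma \ref{kumulant_product} is legitimate but unnecessary. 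Done correctly, both computations land on the same rational function, $\Rr_2(\U^{-1},\V)=\bigl(8\lambda^2-6\lambda+2\bigr)/(\lambda-1)^5$ in the paper's normalization.

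The genuine gap is your final step. The proposition is quantified over the whole free Gamma family: for \emph{every} free Gamma distributed $\X$ (equivalently, every $\lambda>1$ after normalizing the scale) the pair must fail to be free; indeed the remark following the proposition needs exactly this universal statement. Substituting the single value $\lambda=2$ only proves non-freeness for that one distribution. A rational function that is non-zero at one point can still vanish at other admissible parameters, and at such a parameter your chosen cumulant would give no information, so ``completes the argument'' is not justified. You must instead check that the assembled function has no zero on $(1,\infty)$ --- here immediate, since the numerator $8\lambda^2-6\lambda+2$ has negative discriminant --- or keep $\lambda$ general throughout as the paper does. As written, your plan establishes only ``there exists a free Gamma distribution for which freeness fails,'' not the stated proposition.
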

\begin{remark}
The above proposition gives a negative answer to the question stated in \cite{BoBr2006} after Prop. 3.6. Moreover the above result together with the Proposition 3.6 from \cite{BoBr2006}  imply that there is no such a pair of free, identically distributed random variables, that $\X+\Y$ and $(\X+\Y)^{-1}\X^{2}(\X+\Y)^{-1}$ are free.
\end{remark}
\begin{proof}[Proof of Proposition \ref{free_gamma}]
Similarly as in the proof of the Theorem \ref{lukacs} we can equivalently prove that $\X+\Y$ i $(\X+\Y)\X^{-2}(\X+\Y)$ are not free. Let us note that $(\X+\Y)\X^{-2}(\X+\Y)=(\I+\Y\X^{-1})(\I+\X^{-1}\Y)$. We will prove that
$\mathcal{R}_{2}\left((\I+\Y\X^{-1})(\I+\X^{-1}\Y),\X+\Y\right)\neq 0$.\\
From the Lemma \ref{kumulant_product} we can write
\begin{align*}
\mathcal{R}_{2}\left((\I+\Y\X^{-1})(\I+\X^{-1}\Y),\X+\Y\right)=&\mathcal{R}_{3} \left(\I+\Y\X^{-1},\I+\X^{-1}\Y,\X+\Y\right)\\+&
\mathcal{R}_{2}(\I+\Y\X^{-1},\X+\Y)\mathcal{R}_{1}(\I+\X^{-1}\Y)\\
+&\mathcal{R}_{2}(\I+\X^{-1}\Y,\X+\Y)\mathcal{R}_{1}(\I+\Y\X^{-1})
\end{align*}
Lemma \ref{zero_ident} implies
\begin{align*}
\mathcal{R}_{3} \left(\I+\Y\X^{-1},\I+\X^{-1}\Y,\X+\Y\right)=
\mathcal{R}_{3}\left(\Y\X^{-1},\X^{-1}\Y,\X\right)+
\mathcal{R}_{3}\left(\Y\X^{-1},\X^{-1}\Y,\Y\right).
\end{align*}
Using Lemma \ref{kumulant_product} we get
\begin{align*}
\mathcal{R}_{3}\left(\Y\X^{-1},\X^{-1}\Y,\X\right)&=
\Rr_4\left(\Y,\X^{-1},\X^{-1}\Y,\X\right)+
\Rr_1\left(\X^{-1}\right)\Rr_3\left(\Y,\X^{-1}\Y,\X\right)\\
&+\Rr_2\left(\Y,\X\right)\Rr_2\left(\X^{-1},\X^{-1}\Y\right)+
\Rr_1\left(\Y\right)\Rr_3\left(\X^{-1},\X^{-1}\Y,\X\right).
\end{align*}
We can use Lemma \ref{kumulant_product} once again and by freeness of $\X$ and $\Y$ we obtain 
\begin{align*}
\mathcal{R}_{3}\left(\Y\X^{-1},\X^{-1}\Y,\X\right)=
\Rr_1^2\left(\Y\right)\Rr_3\left(\X^{-1},\X^{-1},\X\right).
\end{align*}
By Lemma \ref{kumulanty} (with swapped roles of $\X$ and $\X^{-1}$) we get 
\begin{align*}
\mathcal{R}_{3}\left(\Y\X^{-1},\X^{-1}\Y,\X\right)=0.
\end{align*}
Similarly
\begin{align*}
\mathcal{R}_{3}\left(\Y\X^{-1},\X^{-1}\Y,\Y\right)=&\Rr_3(\Y)\Rr_1\left(\X^{-1}\right)^2+\Rr_3(\Y)
\Rr_2\left(\X^{-1},\X^{-1}\right)\\+&\Rr_1(\Y)\Rr_2\left(\X^{-1},\X^{-1}\right)\Rr_2(\Y)+
\Rr_2(\Y)\Rr_2\left(\X^{-1},\X^{-1}\right)\R_1(\Y)=\\
&\frac{2\lambda^2}{(\lambda-1)^5}+\frac{2\lambda}{(\lambda-1)^5}+\frac{\lambda}{(\lambda-1)^4}+
\frac{\lambda}{(\lambda-1)^4}=\frac{4\lambda^2}{(\lambda-1)^5}.
\end{align*}
Which gives
\begin{align*}
\mathcal{R}_{3} \left(\I+\Y\X^{-1},\I+\X^{-1}\Y,\X+\Y\right)=\frac{4\lambda^2}{(\lambda-1)^5}.
\end{align*}
One can also check that
\begin{align*}
&\mathcal{R}_{2}(\I+\X^{-1}\Y,\X+\Y)\mathcal{R}_{1}(\I+\Y\X^{-1})=
\mathcal{R}_{2}(\I+\Y\X^{-1},\X+\Y)\mathcal{R}_{1}(\I+\X^{-1}\Y)=\\
&\left(\Rr_2\left(\Y\X^{-1},\X\right)+\Rr_2\left(\Y\X^{-1},\Y\right)\right)
\left(1+\Rr_1\left(\X^{-1}\right)\Rr_1(\Y)\right)=\\
&\left(-\Rr_1(\Y)\Rr_1(\X)+\Rr_1\left(\X^{1}\right)\Rr_2(\Y)\right)\left(1+\Rr_1\left(\X^{-1}\right)\Rr_1(\Y)\right)
=\frac{2\lambda-1}{(\lambda-1)^4}
\end{align*}
So the cumulant $\mathcal{R}_{2}\left((\I+\Y\X^{-1})(\I+\X^{-1}\Y),\X+\Y\right)$ is not equal to $0$.
\end{proof}
\subsection*{Acknowledgement} The author thanks J. Weso\l{}owski for many helpful comments and discussions. This research was partially supported by NCN
grant 2012/05/B/ST1/00554.
\bibliographystyle{plain}
\bibliography{Bibl}
\end{document}